\newtheorem{thm}{Theorem}[section]
\newtheorem{prop}[thm]{Proposition}
\newtheorem{lem}[thm]{Lemma}
\newtheorem{claim}[thm]{Claim}
\theoremstyle{definition}
\newtheorem{defn}{Definition}
\newcommand{\Om}{\Omega}
\newcommand{\del}{\delta}
\newcommand{\pa}{\partial}
\newcommand{\sm}{\setminus}
\newcommand{\sub}{\subseteq}
\renewcommand{\c}[1]{\mathcal{#1}}
\newcommand{\tr}[1]{\textrm{#1}}
\newcommand{\cH}{\c{H}}
\title{On $t$-intersecting Hypergraphs with Minimum Positive Codegrees}
\author{Sam Spiro\footnote{Dept.\ of Mathematics, UCSD {\tt sspiro@ucsd.edu}. This material is based upon work supported by the National Science Foundation Graduate Research Fellowship under Grant No. DGE-1650112.}}
\date{\today}
\begin{document}
	\maketitle
	\begin{abstract}
		For a hypergraph $\mathcal{H}$, define the minimum positive codegree $\delta_i^+(\mathcal{H})$ to be the largest integer $k$ such that every $i$-set which is contained in at least one edge of $\mathcal{H}$ is contained in at least $k$ edges.  For $1\le s\le k,t$ and $t\le r$, we prove that for $n$-vertex $t$-intersecting $r$-graphs $\mathcal{H}$ with $\delta_{r-s}^+(\mathcal{H})>{k-1\choose s}$, the unique hypergraph with the maximum number of edges is the hypergraph $\mathcal{H}$ consisting of every edge which intersects a set of size $2k-2s+t$ in at least $k-s+t$ vertices provided $n$ is sufficiently large.  This generalizes work of Balogh, Lemons, and Palmer who proved this for $s=t=1$, as well as the Erd\H{o}s-Ko-Rado theorem when $k=s$.
	\end{abstract}

\section{Introduction}

We say that a hypergraph $\cH$ is an \textit{$r$-graph} if every edge $h\in \cH$ has size $r$, and we say that $\cH$ is \textit{$t$-intersecting} if $|h\cap h'|\ge t$ for any distinct $h,h'\in \cH$.  The central result concerning $t$-intersecting $r$-graphs is the famous Erd\H{o}s-Ko-Rado theorem.
\begin{thm}[\cite{erdos1961intersection}]\label{thm:EKR}
	For $t\le r$, if $\cH$ is an $n$-vertex $t$-intersecting $r$-graph with the maximum number of edges, then there exists a set $T$ of size $t$ such that $\cH$ consists of every edge containing $T$ provided $n$ is sufficiently large.
\end{thm}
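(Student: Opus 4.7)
The plan is to analyze the common intersection $K := \bigcap_{h \in \cH} h$ and split into two cases based on whether $|K| \ge t$. The extremal structure will emerge from the first case, and the second case will be ruled out by a crude counting argument that loses a factor of $n$.

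In the first case, if $|K| \ge t$, pick any $t$-subset $T \subseteq K$. Since every edge of $\cH$ contains $T$, we have $|\cH| \le \binom{n-t}{r-t}$. Furthermore, the star consisting of all $r$-sets containing $T$ is itself $t$-intersecting (every pair of edges shares $T$), so this bound is attained, and uniquely so by a star around some $t$-set.

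In the second case, if $|K| < t$, I would show $|\cH| = O(n^{r-t-1})$, which is asymptotically smaller than $\binom{n-t}{r-t} = \Theta(n^{r-t})$. Fix an arbitrary edge $h_0 \in \cH$. Since $\cH$ is $t$-intersecting, every edge of $\cH$ contains at least one $t$-subset of $h_0$, so it suffices to bound, for each $t$-subset $T \subseteq h_0$, the number of edges $e \in \cH$ with $T \subseteq e$. The hypothesis $|K| < t$ forces the existence of a witness edge $h_T \in \cH$ with $T \not\subseteq h_T$. Any $e \supseteq T$ in $\cH$ satisfies $|e \cap h_T| \ge t$ while $|T \cap h_T| \le t-1$, so $e$ must meet $h_T \setminus T$. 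Thus the number of edges through $T$ is at most $r \binom{n-t-1}{r-t-1}$, and summing over the $\binom{r}{t}$ choices of $T$ gives $|\cH| \le \binom{r}{t} r \binom{n-t-1}{r-t-1}$.

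Comparing the two cases, the first produces a family of size $\Theta(n^{r-t})$ while the second is confined to $O(n^{r-t-1})$, so for $n$ sufficiently large the maximum is attained only in the first case, whose extremizer is uniquely a star around some $t$-set. The main obstacle is engineering the witness edges in the second case so that the $t$-intersecting condition bites: the key observation is that $|T \cap h_T| \le t-1$ forces each edge $e$ through $T$ to use at least one vertex from the small specified set $h_T \setminus T$, which is exactly what yields the saving of one power of $n$.
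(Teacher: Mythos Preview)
The paper does not give its own proof of Theorem~\ref{thm:EKR}; it simply cites the original Erd\H{o}s--Ko--Rado paper and uses the result as a black box. So there is no proof in the paper to compare against.

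That said, your argument is correct and is a standard elementary route to the ``$n$ sufficiently large'' version of the theorem. The dichotomy on $|K|$ is clean: in Case~1 the bound $|\cH|\le\binom{n-t}{r-t}$ and its tightness/uniqueness by the full star are immediate; in Case~2 the key point---that $|K|<t$ guarantees for \emph{every} $t$-set $T$ a witness edge $h_T\not\supseteq T$, forcing each edge through $T$ to pick up an extra vertex from the bounded set $h_T\setminus T$---is exactly right and yields the $O(n^{r-t-1})$ bound. One small remark: to conclude uniqueness you should also note that the extremal $\cH$ cannot have $|K|>t$, since then $|\cH|\le\binom{n-|K|}{r-|K|}=O(n^{r-t-1})$, so in fact $|K|=t$ and $T=K$ is forced; you implicitly use this but it is worth stating. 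Compared to the original shifting/compression proof (which gives the sharp threshold in $n$), your approach is cruder in the bound on $n$ but entirely self-contained and adequate for the asymptotic statement quoted here.
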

There exist numerous extensions, variants, and applications of the Erd\H{o}s-Ko-Rado theorem; see for example the book and survey by Frankl and Tokushige~\cite{frankl2016invitation,frankl2018extremal} and the book by Godsil and Meagher~\cite{godsil2016erdos}.  Motivated by minimum degree variants of the Erd\H{o}s-Ko-Rado theorem, Balogh, Lemons, and Palmer~\cite{balogh2021maximum} considered a variant involving minimum positive codegrees.  

\begin{defn}
	Given a hypergraph $\cH$ and integer $i$, we define its \textit{minimum positive $i$-degree} $\del^+_i(\cH)$ to be the largest integer $k$ such that if $S$ is a set of $i$ vertices contained in at least one edge, then $S$ is contained in at least $k$ edges.  We adopt the convention that $\del^+_i(\cH)=\infty$ if $\cH$ has no edges.
\end{defn}

To state the main result of \cite{balogh2021maximum}, we require one more definition.
\begin{defn}
	We say that an $r$-graph $\cH$ is an \textit{$(a,b)$-kernel system} if there exists $X\sub V(\cH)$ with $|X|=a$ such that $h\in \cH$ if and only if $|h\cap X|\ge b$. 
\end{defn}
For example, $(t,t)$-kernel systems are exactly the extremal constructions appearing in the Erd\H{o}s-Ko-Rado theorem.  More generally, Ahlswede and Khachatrian~\cite{ahlswede1997complete} showed that every $n$-vertex $t$-intersecting $r$-graph $\cH$ with the maximum number of edges is a $(2i+t,i+t)$-kernel system for some $i$ provided $n>2r-t$.  There are many other contexts where kernel systems appear as extremal constructions for variants of the Erd\H{o}s-Ko-Rado theorem, especially for problems related to maximum degrees; see for example \cite{frankl1978intersecting,frankl1987erdos,lemons2008unbalance}.

It is not hard to show that if $r\ge k-s+t$, then an $r$-uniform $(2k-2s+t,k-s+t)$-kernel system $\cH$ is $t$-intersecting with $\del_{r-s}^+(\cH)={k\choose s}$.   The main result of Balogh, Lemons, and Palmer~\cite{balogh2021maximum} shows that when $s=t=1$, this is the unique $r$-graph with these properties which has the maximum number of edges.

\begin{thm}[\cite{balogh2021maximum}]\label{thm:old}
		Let $k\ge 1$ and let $\cH$ be an $n$-vertex 1-intersecting $r$-graph with $\del_{r-1}^+(\cH)\ge k$.  If $\cH$ has the maximum number of edges amongst hypergraphs with these properties, then $\cH$ is a $(2k-1,k)$-kernel system if $n$ is sufficiently large in terms of $r$.
\end{thm}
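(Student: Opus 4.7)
My plan is to identify a set $X \subseteq V(\cH)$ of size $2k-1$ and show that $\cH$ coincides with the kernel system $\cH^* = \{h \in \binom{V(\cH)}{r} : |h \cap X| \ge k\}$. As observed in the paper, $\cH^*$ is $1$-intersecting and has $\del_{r-1}^+(\cH^*) = k$, with $|\cH^*| = \binom{2k-1}{k}\binom{n-2k+1}{r-k} + O_r(n^{r-k-1}) = \Theta_r(n^{r-k})$, so by maximality $|\cH| \ge |\cH^*|$. A preliminary observation used throughout is that every vertex in some edge has degree $\ge k$: if $v \in h \in \cH$, the codegree hypothesis applied to any $(r-1)$-subset $S \subset h$ containing $v$ yields $k$ edges through $S$, each containing $v$.

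The first step is to locate the kernel. Since $|\cH| = \Theta_r(n^{r-k})$, for large $n$ the Erd\H{o}s--Rado sunflower lemma produces a sunflower in $\cH$ of size exceeding $r$ with some core $Y$, and the $1$-intersecting property forces every edge of $\cH$ to meet $Y$. Refining this --- iterating sunflower extraction inside links of $\cH$ and using the codegree hypothesis to pin down the kernel structure --- together with counting vertices of degree $\Omega(n^{r-k})$, I would extract a candidate set $X$ of size $2k-1$ and show that $\cH$ contains all but $o(n^{r-k})$ edges of the corresponding $\cH^*$.

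Given such an $X$, it suffices to show $\cH$ has no ``stray'' edge $h$ with $|h \cap X| \le k-1$, for then $\cH \subseteq \cH^*$ and $|\cH| \ge |\cH^*|$ forces $\cH = \cH^*$. Suppose such an $h$ exists. Since $|X \setminus h| \ge k$, fix a $k$-subset $K^* \subseteq X \setminus h$; the $1$-intersecting property forces every edge $K^* \cup P \in \cH$, with $P$ an $(r-k)$-subset of $V(\cH) \setminus X$, to satisfy $P \cap (h \setminus X) \ne \emptyset$. Since $|h \setminus X| \le r$ is bounded in $n$, only $O_r(n^{r-k-1}) = o(n^{r-k})$ such sets $P$ exist, but the previous step guarantees $(1-o(1)) \binom{n-2k+1}{r-k} = \Theta_r(n^{r-k})$ of them lie in $\cH$ --- a contradiction for $n$ large.

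The main obstacle is the kernel-location step: producing $X$ with the property that $\cH$ covers almost all of $\cH^*$. This requires carefully combining the $1$-intersecting property with the codegree condition --- neither alone pins down the kernel structure --- and I expect it to proceed via a stability-type argument, bootstrapping from a rough sunflower core up to the precise $(2k-1)$-set.
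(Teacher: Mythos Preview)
Your outline has the right overall shape --- locate a kernel $X$, show $\cH\subseteq\cH^*$, then invoke maximality --- but the kernel-location step is a genuine gap, not a technicality, and your proposed intermediate claim is both unproven and stronger than what is actually needed.

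Specifically, you assert that one can ``extract a candidate set $X$ of size $2k-1$ and show that $\cH$ contains all but $o(n^{r-k})$ edges of the corresponding $\cH^*$.'' You give no mechanism for this, and there is no a~priori reason it should hold: maximality only gives $|\cH|\ge|\cH^*|$, not $\cH\supseteq$ most of $\cH^*$, and the codegree hypothesis by itself does not force $\cH$ to contain nearly every edge through a fixed $k$-subset of $X$. In effect this step already presupposes a stability version of the theorem. Moreover, your step~3 does not need the ``almost all'' claim at all: to contradict a stray edge $h$ with $|h\cap X|\le k-1$, a \emph{single} edge $K^*\cup P\in\cH$ with $K^*\subseteq X\setminus h$ and $P\cap h=\emptyset$ already violates $1$-intersection.

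The paper (following Balogh--Lemons--Palmer) handles the kernel location differently and more directly. Using the F\"uredi/Mubayi--Zhao delta-system result (Proposition~\ref{prop:MZ}) together with Lemma~\ref{lem:largeCore}, one first finds a sunflower in $\cH$ with many petals and core $Y$ of size exactly $k$. The codegree condition $\del_{r-1}^+(\cH)\ge k$ is then used to produce a second core $Z$ with $|Y\cap Z|=1$, and to show that \emph{every} $k$-subset of $X:=Y\cup Z$ is the core of a sunflower with at least $r+1$ petals. Given this, any stray edge $h$ misses some $k$-subset $K^*\subseteq X$, and one of the $r+1$ petals of the sunflower on $K^*$ is disjoint from $h$, contradicting $1$-intersection. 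No counting of ``missing'' edges of $\cH^*$ is needed; containment $\cH\subseteq\cH^*$ follows immediately, and maximality finishes. Your sketch never supplies the analogue of this step --- producing $Z$ and propagating the sunflower-core property to all $k$-subsets of $Y\cup Z$ --- and that is exactly where the codegree hypothesis does its work.
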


The proof of Theorem~\ref{thm:old} utilized the delta-system method.  Using a similar approach together with some new ideas, we extend Theorem~\ref{thm:old} to $t$-intersecting $\cH$ which have bounded positive minimum $(r-s)$-degree for essentially all values of $s$ and $t$.
\begin{thm}\label{thm:main}
	Let $k,r,s,t$ be positive integers with $s\le k,t$ and $t\le r$, and let $\cH$ be an $n$-vertex $t$-intersecting $r$-graph with $\del_{r-s}^+(\cH)>{k-1\choose s}$.  If $\cH$ has the maximum number of edges amongst hypergraphs with these properties, then $\cH$ is a $(2k-2s+t,k-s+t)$-kernel system if $n$ is sufficiently large in terms of $r$.
\end{thm}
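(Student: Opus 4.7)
Following the delta-system strategy of \cite{balogh2021maximum}, I would first apply the delta-system method (in the style of Frankl--F\"uredi) to extract a subhypergraph $\cH^\star \sub \cH$ of size a positive fraction of $|\cH|$ (depending only on $r$) on which every edge $h$ carries a well-defined \emph{kernel} $K(h) \sub h$ with the usual properties: many edges of $\cH^\star$ contain $K(h)$ and form a near-sunflower with core $K(h)$, and whenever $K(h) \sub h'$ with $h' \in \cH^\star$ one has $K(h) \sub K(h')$. Since $\cH$ is $t$-intersecting, two kernels arising from a common sunflower necessarily share at least $t$ vertices; a short argument promotes this to $|K(h) \cap K(h')| \ge t$ for all pairs in $\cH^\star$.

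The heart of the argument is to show that the codegree hypothesis $\del_{r-s}^+(\cH) > {k-1 \choose s}$ forces $|K(h)| \ge k-s+t$ for every $h \in \cH^\star$. I would assume toward a contradiction that $|K(h)| \le k-s+t-1$, pick an $(r-s)$-set $S \sub h$ that minimizes $|S \cap K(h)|$, and bound the number of edges $h' \in \cH$ containing $S$. The sunflower structure around $K(h)$ combined with the $t$-intersecting property restricts such extensions of $S$ to at most ${k-1 \choose s}$ in number, contradicting the codegree bound. This is the step where the new ideas beyond the $s=t=1$ case of \cite{balogh2021maximum} must enter, because the accounting now depends jointly on both parameters.

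Once kernel sizes are bounded below, a combinatorial argument using the $t$-intersection property on a few well-chosen kernels produces a set $X$ of size $2k-2s+t$ such that every edge of $\cH^\star$ satisfies $|h \cap X| \ge k-s+t$. To promote this structure from $\cH^\star$ to all of $\cH$, one compares $|\cH|$ to the size of the $(2k-2s+t, k-s+t)$-kernel system on $X$: for $n$ sufficiently large, any edge with $|h \cap X| < k-s+t$ contributes far fewer extensions than are lost by excluding a kernel-system edge, so extremality forces $|h \cap X| \ge k-s+t$ for every $h \in \cH$ and hence $\cH$ equals the kernel system. The main obstacle throughout is the kernel lower bound: the sharpness of ${k-1 \choose s}$ requires a delicate combinatorial count that must handle $s$ and $t$ simultaneously and exploit the precise interplay between $(r-s)$-sets straddling the kernel and its complement.
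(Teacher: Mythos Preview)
Your outline follows the Balogh--Lemons--Palmer template, and the paper does begin the same way: it uses sunflowers to find a core $Y$ of size exactly $k-s+t$ (your Step~2 kernel lower bound is the paper's Lemma~2.4) and then a second core $Z$ with $|Y\cap Z|=t$, so that $X:=Y\cup Z$ has size $2k-2s+t$. The gap is in your Step~4.

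You propose to ``promote'' from $\cH^\star$ to all of $\cH$ by extremality: an edge with $|h\cap X|<k-s+t$ allegedly costs more in excluded kernel-system edges than it gains. But no such swap is available. A single edge $h$ with small intersection with $X$ need not force the exclusion of any kernel-system edge (it can perfectly well be $t$-intersecting with all of them), and there is no a~priori bound of the form $o(n^{r-k+s-t})$ on the number of edges of $\cH$ lying outside the kernel system on $X$. What must actually be shown is that the \emph{hypotheses themselves}---$t$-intersection together with $\del_{r-s}^+(\cH)>{k-1\choose s}$---forbid any such $h$. The paper does this by introducing \emph{bad triples} $(h,Y,Z)$ and proving directly that none exist: after reducing (Lemmas~2.7 and~2.9) to the situation $|h\cap Z|=t$ and $t-s+1\le |h\cap Y\cap Z|<s$, it constructs two $s$-uniform families $\c S_h,\c S_Y$ inside a common $k$-set, each of size $>{k-1\choose s}$, disjoint from one another, and with a cross-intersection lower bound, and then derives a contradiction via the Kruskal--Katona theorem. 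The paper explicitly notes (with a concrete example at $s=t=2$, $k=4$) that under the weaker threshold $>{k-1\choose s}$ one cannot arrange that every $(k-s+t)$-subset of $X$ is a sunflower core, so even the BLP-style promotion---which is not what you wrote, but is the natural fallback---breaks down here. The Kruskal--Katona step is the genuinely new ingredient, and your plan never invokes it; the phrase ``a combinatorial argument using the $t$-intersection property on a few well-chosen kernels'' in Step~3 is precisely the place where this idea must enter, and it is not supplied.
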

This theorem shows a surprising phenomenon: despite only demanding $\del_{r-s}^+(\cH)>{k-1\choose s}$ in the hypothesis, the extremal constructions of Theorem~\ref{thm:main} end up having $\del_{r-s}^+(\cH)= {k\choose s}$, which is a significantly stronger condition if $s>1$.  We note that the hypothesis $\del_{r-s}^+(\cH)>{k-1\choose s}$ in Theorem~\ref{thm:main} is best possible, since if we only demanded $\del_{r-s}^+(\cH)\ge {k-1\choose s}$, then a $(2k-2-2s+t,k-1-s+t)$-kernel system would satisfy the hypothesis and have significantly more edges.

Let us briefly discuss the range of parameters in Theorem~\ref{thm:main}.  Observe that ${k-1\choose s}=0$ for all $k\le s$, so there is no loss in generality by considering $k\ge s$.  If $s>t$, then the problem is essentially trivial in view of Theorem~\ref{thm:EKR}.  This is because a $(t,t)$-kernel system will satisfy the positive minimum positive codegree condition if $n$ is sufficiently large, and $(t,t)$-kernel systems have the maximum  number of edges amongst $t$-intersecting $r$-graphs if $n$ is sufficiently large.  If one considers $r<t$, then any $t$-intersecting $r$-graph has at most one edge, so the problem becomes trivial. Thus Theorem~\ref{thm:main} covers all of the non-trivial ranges of parameters that we could consider for this problem.

\section{Proof of Theorem~\ref{thm:main}}
Our argument starts off nearly identical to that of \cite{balogh2021maximum}.  We note that Theorem~\ref{thm:main} implicitly says that if $r<k-s+t$, then any $\cH$ satisfying the hypothesis of Theorem~\ref{thm:main} is empty (since $(a,b)$-kernel systems are empty if $r<b$).  The following confirms this is the case.
\begin{lem}\label{lem:triv}
	For $1\le s\le k,t$ and $t\le r$, if $\cH$ is a non-empty $t$-intersecting $r$-graph  with $\del_{r-s}^+(\cH)>{k-1\choose s}$, then $r\ge k-s+t$.
\end{lem}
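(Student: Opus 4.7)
The plan is to derive the lemma from the general codegree bound
\[
\del_{r-s}^+(\cH) \le \binom{r+s-t}{s}
\]
which holds for every non-empty $t$-intersecting $r$-graph $\cH$. Granted this bound, the hypothesis $\del_{r-s}^+(\cH)>\binom{k-1}{s}$ gives $\binom{k-1}{s} < \binom{r+s-t}{s}$. When $k>s$ both sides are positive and the strict monotonicity of $\binom{n}{s}$ in $n\ge s$ yields $k-1<r+s-t$, i.e., $r\ge k-s+t$. In the boundary case $k=s$ the conclusion $r\ge k-s+t=t$ is already among the standing hypotheses, so that case is trivial.

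To prove the codegree bound, if $|\cH|=1$ then every $(r-s)$-subset of the unique edge lies in exactly one edge, so $\del_{r-s}^+(\cH)=1\le\binom{r+s-t}{s}$ (using $r\ge t$). Otherwise $|\cH|\ge 2$, and I set $t^*=\min_{h\ne h'\in\cH}|h\cap h'|\ge t$ and fix a pair $h,h'\in \cH$ attaining this minimum; by construction $\cH$ is $t^*$-intersecting and $t^*\le r-1$ since $h\ne h'$.

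The crucial choice is an $(r-s)$-subset $S\sub h$ that contains all of $h\setminus h'$. This is possible because $|h\setminus h'|=r-t^*\le r-s$ (using $t^*\ge t\ge s$); the remaining $t^*-s$ vertices of $S$ must come from $h\cap h'$, giving $|S\cap h'|=t^*-s$ and $|h'\setminus S|=r+s-t^*$. Since $h\setminus h'$ is nonempty we have $S\not\sub h'$, so every edge $e\supseteq S$ is automatically distinct from $h'$. Then $|e\cap h'|\ge t^*$ combined with $|S\cap h'|=t^*-s$ forces $|(e\setminus S)\cap h'|\ge s=|e\setminus S|$, so $e\setminus S$ is one of the $\binom{r+s-t^*}{s}$ $s$-subsets of $h'\setminus S$. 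This gives $d(S)\le\binom{r+s-t^*}{s}\le\binom{r+s-t}{s}$, which proves the bound.

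The step that takes the most care is the choice of $S$: by packing all of $h\setminus h'$ into $S$ one forces the ``free'' part $e\setminus S$ of every extension $e\supseteq S$ to be squeezed inside $h'$ via the $t^*$-intersection with $h'$. Everything else is routine counting together with the observation that one may pass from $t$-intersecting to $t^*$-intersecting, where $t^*$ is the minimum pairwise intersection, so that a pair $h,h'$ with $|h\cap h'|=t^*$ is available.
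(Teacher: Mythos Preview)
Your proof is correct. The underlying mechanism matches the paper's: both arguments hinge on a pair of edges $h,h'$ with minimum intersection and count $s$-sets controlling the edges through a suitably chosen $(r-s)$-subset. The packaging differs slightly. You extract a clean standalone inequality $\del_{r-s}^+(\cH)\le\binom{r+s-t}{s}$ by taking $S\subseteq h$ to contain all of $h\setminus h'$ and using that $\cH$ is $t^*$-intersecting to force $e\setminus S\subseteq h'\setminus S$ for every edge $e\supseteq S$; the lemma then follows by comparing binomial coefficients. The paper instead argues by contradiction: assuming $r\le k-s+t-1$, it picks $S\subseteq h\cap h'$, notes that $h'\setminus S$ lies in more than $\binom{r-t+s}{s}$ edges while $|h\setminus(h'\setminus S)|\le r-t+s$, and so produces an edge $(h'\setminus S)\cup S'$ with strictly smaller intersection with $h$, contradicting minimality. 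Your formulation has the minor advantage of isolating a general codegree bound that could be quoted elsewhere, but otherwise the two proofs are essentially equivalent.
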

Here and throughout the text we refer to sets $I$ of size $i$ as \textit{$i$-sets}.
\begin{proof}
	The result is immediate if $k=s$, so assume $k>s$.  Assume for contradiction that $r\le k-s+t-1$ and let $h\in \cH$.  Because $k>s$, the minimum positive codegree condition implies that there is another edge $h'\ne h$ in $\cH$, and we will choose such an edge so that $|h\cap h'|$ is as small as possible.  
	
	Observe that $|h\cap h'|\ge t\ge s$ since $\cH$ is $t$-intersecting.  Let $S\sub h\cap h'$ be any $s$-set.  By the minimum positive codegree condition, the $(r-s)$-set $h'\sm S$ is contained in more than ${k-1\choose s}\ge {r-t+s\choose s}$ edges. As $h\sm (h'\sm S)$ has size at most $r-t+s$, we conclude that there exist some $s$-set $S'\not\sub h\sm (h'\sm S)$ such that $h'':=(h'\sm S)\cup S'\in \cH$.  Observe that $|h\cap h''|<|h\cap h'|$ since $h''$ was obtained from $h'$ be deleting an $s$-subset of $h$ and adding an $s$-set that was not contained entirely in $h$.  This contradicts us choosing $|h\cap h'|$ as small as possible, a contradiction.
\end{proof}

The remainder of our proof relies heavily on sunflowers.
\begin{defn}
	We say that a hypergraph $\c{F}$ is a \textit{sunflower} if there exists a set $X$ such that $h\cap h'=X$ for any distinct $h,h'\in \c{S}$.  In this case we say that $X$ is the \textit{core} of $\c{F}$ and that the sets $h\sm X$ with $h\in \c{F}$ are the \textit{petals} of $\c{F}$.  When $\c{F}$ consists of a single edge $h$, we adopt the convention that $h$ is the core of $\c{F}$.
\end{defn}

The main result in the theory of sunflowers is the following result of Erd\H{o}s and Rado.

\begin{thm}[\cite{erdosSunflower}]\label{thm:sunflower}
	For every $r,p\ge 1$, there exists a constant $f(r,p)\le r!(p-1)^r$ such that if $\cH$ is an $r$-graph with more than $f(r,p)$ edges, then $\cH$ contains a sunflower with at least $p$ petals.
\end{thm}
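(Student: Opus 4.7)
The plan is to prove this by induction on $r$, following the classical Erd\H{o}s--Rado argument. The key idea is that if a hypergraph has no large matching, then some vertex has high degree, and removing that vertex reduces the uniformity so that induction applies.

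For the base case $r=1$, an $r$-graph is a collection of singletons. If $|\cH|>p-1$, then $\cH$ contains at least $p$ distinct singletons, and these form a sunflower with empty core and $p$ petals. Thus $f(1,p)\le p-1=1!(p-1)^1$.

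For the inductive step, I would assume the result for $(r-1)$-graphs and suppose $\cH$ is an $r$-graph with more than $r!(p-1)^r$ edges. Take a maximal matching $\c{M}\sub \cH$, i.e.\ a maximal collection of pairwise disjoint edges. If $|\c{M}|\ge p$, these $p$ edges already constitute a sunflower with empty core and $p$ petals, so we are done. Otherwise, set $Y=\bigcup_{h\in \c{M}}h$, so $|Y|\le r(p-1)$. By maximality of $\c{M}$, every edge of $\cH$ meets $Y$, so by pigeonhole some vertex $v\in Y$ lies in more than
\[ \f{r!(p-1)^r}{r(p-1)}=(r-1)!(p-1)^{r-1} \]
edges of $\cH$. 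Let $\cH'=\{h\sm\{v\}:v\in h\in \cH\}$. Then $\cH'$ is an $(r-1)$-graph with more than $(r-1)!(p-1)^{r-1}$ edges (the map $h\mapsto h\sm\{v\}$ is injective on edges containing $v$), so by induction $\cH'$ contains a sunflower $\c{F}'$ with $p$ petals. Adding $v$ back to each edge of $\c{F}'$ yields a sunflower in $\cH$ with $p$ petals (whose core is that of $\c{F}'$ together with $v$).

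The argument has no real obstacle; the only step requiring mild care is tracking the strict inequality through the pigeonhole bound, so that the hypothesis of the inductive case is satisfied with a strict inequality. The recursion $f(r,p)=r(p-1)\cdot f(r-1,p)$ together with $f(1,p)=p-1$ yields exactly the bound $f(r,p)\le r!(p-1)^r$ claimed in the theorem.
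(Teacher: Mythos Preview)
Your argument is the classical Erd\H{o}s--Rado induction and is correct as written; the strict inequality is preserved through the pigeonhole step exactly as you note, and the link construction produces a genuine $(r-1)$-graph on which the inductive hypothesis applies. There is nothing to compare against in the paper itself: the paper does not supply a proof of this theorem but simply cites it as a known result from \cite{erdosSunflower}, using only the existence of the constant $f(r,p)$ as a black box.
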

Much stronger bounds for $f(r,p)$ have been obtained in breakthrough work of Alweiss et. al. \cite{alweiss2020improved}, but for our purposes we only need that $f(r,p)$ is a constant. Theorem~\ref{thm:sunflower} does not give any control over the size of the core of a sunflower in $\cH$, and for this we use a result of Mubayi and Zhao~\cite{mubayi2007forbidding} which is based off of work of F\"uredi~\cite{furedi1983finite}.
\begin{prop}[\cite{mubayi2007forbidding}]\label{prop:MZ}
	If $r> k-s+t$ and $p\ge 1$, then there exists a constant $C$ depending on $r,p$ such that if $|\cH|\ge C n^{r-k+s-t-1}$, then $\cH$ contains a sunflower with at least $p$ petals and core of size at most $k-s+t$.
\end{prop}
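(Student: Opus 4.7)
The plan is to apply F\"uredi's delta-system method~\cite{furedi1983finite}, which refines the Erd\H{o}s--Rado sunflower theorem (Theorem~\ref{thm:sunflower}) to give control over the core size. Write $a := k-s+t$; the aim is to produce a $p$-sunflower whose core has size at most $a$, starting from the density hypothesis $|\cH| \ge Cn^{r-a-1}$.

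A natural first attempt is via links: for each $X \sub V(\cH)$ with $|X| \le a$, the link $\cH_X = \{h \sm X : X \sub h \in \cH\}$ is an $(r-|X|)$-graph, and a $p$-sunflower in $\cH$ with core $X$ corresponds to a matching of size $p$ in $\cH_X$. A standard greedy argument gives such a matching whenever $d(X) = |\cH_X| > (p-1) r \binom{n}{r-|X|-1}$. However, a direct shadow-averaging argument only produces some $a$-set $Y$ with $d(Y) = \Omega(n^{r-2a-1})$, which is insufficient once $a \ge 1$. So a one-shot link argument cannot succeed and some form of iteration is required.

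The plan is therefore a two-step argument. First, I would extract via F\"uredi's structure theorem a subfamily $\cH^* \sub \cH$ with $|\cH^*| \ge |\cH|/F(r)$ (for a constant $F(r)$ depending only on $r$) in which every intersection pattern is \emph{$p$-rich}---meaning that for every subset $S$ realized as $h \cap h'$ for distinct $h, h' \in \cH^*$, there is already a $p$-sunflower in $\cH^*$ with core $S$. Second, inside $\cH^*$ I would argue that the minimum realized core size is at most $a$. Otherwise every pair of edges in $\cH^*$ shares at least $a+1$ vertices, and shadow-counting from a fixed reference edge $h_0 \in \cH^*$ yields $|\cH^*| \le \binom{r}{a+1} \max_{|S|=a+1} d_{\cH^*}(S) + 1$, which, combined with a second round of averaging, contradicts the density $|\cH^*| = \Omega(n^{r-a-1})$ when $C$ is sufficiently large in terms of $r$ and $p$.

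The main obstacle, and the technical heart of the argument of Mubayi and Zhao~\cite{mubayi2007forbidding}, is this second step: controlling the maximum degree of $(a+1)$-sets inside the structured family $\cH^*$ and ruling out configurations where edges are overly concentrated on such sets. The hypothesis $r > a$ (equivalently $r > k-s+t$) is essential here to ensure that shadow counts have enough room, and careful bookkeeping of the constants throughout the F\"uredi extraction should yield a constant $C$ depending only on $r$ and $p$.
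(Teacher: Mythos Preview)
The paper does not supply a proof of this proposition; it simply cites it as a result of Mubayi and Zhao~\cite{mubayi2007forbidding} building on F\"uredi~\cite{furedi1983finite}. So there is no in-paper argument to compare against, and your sketch is effectively an attempt to reconstruct the cited result.

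Your outline is the standard one and is essentially correct, but you overcomplicate the second step. Once F\"uredi's structure lemma yields $\cH^*\sub\cH$ with $|\cH^*|\ge |\cH|/F(r,p)$ in which every realised intersection $h\cap h'$ is already the core of a $p$-sunflower, the remaining step is a one-line counting argument, not a ``technical heart''. If every pair of edges of $\cH^*$ met in at least $a+1$ vertices, then fixing any $h_0\in\cH^*$, every other edge contains one of the $\binom{r}{a+1}$ subsets of $h_0$ of size $a+1$; since each such subset extends to at most $\binom{n}{r-a-1}$ $r$-sets, one gets $|\cH^*|\le 1+\binom{r}{a+1}\binom{n}{r-a-1}$. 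There is no need to ``control the maximum degree of $(a+1)$-sets'' or run a ``second round of averaging'': the trivial bound $d(S)\le\binom{n}{r-a-1}$ already contradicts $|\cH^*|\ge (C/F(r,p))n^{r-a-1}$ once $C>F(r,p)\binom{r}{a+1}$, and since $a<r$ we have $\binom{r}{a+1}\le 2^r$, so $C$ depends only on $r,p$ as required. Your phrasing suggests a difficulty that is not there; the hard part is entirely in the F\"uredi extraction, which you correctly invoke but do not reprove.
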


Proposition~\ref{prop:MZ} allows us to find sunflowers which have many petals and small cores.  The next lemma shows that cores of sunflowers with many petals can not be too small.

\begin{lem}\label{lem:largeCore}
	For $1\le s\le k,t$, let $\cH$ be a $t$-intersecting $r$-graph with $\del_{r-s}^+(\cH)>{k-1\choose s}$.  If $\c{F}$ is a sunflower of $\cH$ with at least $r+1$ petals and core $Y$, then $|Y|\ge k-s+t$.
\end{lem}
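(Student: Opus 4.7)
The plan is to argue by contradiction: suppose $|Y| = y \leq k - s + t - 1$.

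The first step is the structural claim that every edge $h \in \cH$ satisfies $|h \cap Y| \geq t$. Since the $r+1$ petals are pairwise disjoint and disjoint from $Y$, each vertex of $h \setminus Y$ lies in at most one petal, and $|h \setminus Y| \leq r$ implies that some petal $P_j$ is disjoint from $h$. Then $h \cap h_j = h \cap Y$, and $t$-intersecting applied to the distinct pair $h, h_j$ gives $|h \cap Y| \geq t$. When $h$ is a sunflower edge $h_{j_0}$, the same conclusion holds by picking any $j \neq j_0$: then $h_{j_0} \cap h_j = Y$, and $|Y| \geq t$ follows immediately from $t$-intersecting.

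Now let $t^* := \min_{h \in \cH} |h \cap Y| \geq t$ and fix an edge $h^*$ attaining this minimum. Since $t^* \geq t \geq s$, I can pick any $(t^* - s)$-subset $T \subseteq h^* \cap Y$ and set $A := (h^* \setminus Y) \cup T$. Then $|A| = (r - t^*) + (t^* - s) = r - s$, $A \subseteq h^*$, and $|A \cap Y| = t^* - s$. For any edge $h'' \supseteq A$, writing $h'' = A \cup S'$ with $|S'| = s$ and $S' \cap A = \emptyset$, the minimality of $t^*$ forces $|h'' \cap Y| \geq t^*$, and thus $|S' \cap Y| \geq s$, so $S' \subseteq Y \setminus T$. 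Since $|Y \setminus T| = y - t^* + s \leq (k - s + t - 1) - t + s = k - 1$, the codegree of $A$ is at most
$$\binom{y - t^* + s}{s} \leq \binom{k-1}{s},$$
contradicting $\delta^+_{r-s}(\cH) > \binom{k-1}{s}$ (which applies since $A \subseteq h^*$).

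The main conceptual step is the pigeonhole argument of the first paragraph, which converts the $r+1$ petal hypothesis into a uniform lower bound on $|h \cap Y|$ across all edges. Once that is established, the choice $A = (h^* \setminus Y) \cup T$ is essentially forced: taking $A$ to contain the "non-$Y$" part of a minimum-intersection edge together with just enough of its $Y$-part makes the requirement $|h'' \cap Y| \geq t^*$ push any completion $S'$ into the small set $Y \setminus T$, and the numerology then completes the contradiction.
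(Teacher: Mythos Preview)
Your proof is correct and follows essentially the same strategy as the paper's: first use the $r+1$ petals to show every edge meets $Y$ in at least $t$ vertices, then pick an edge with minimal intersection, delete $s$ vertices of that intersection, and bound the number of completions by $\binom{k-1}{s}$. The only cosmetic difference is that the paper passes to a minimum-size set $Z\subseteq Y$ with the property $|h\cap Z|\ge t$ for all $h$ (so that some edge meets $Z$ in exactly $t$ vertices), whereas you keep $Y$ and work with $t^*=\min_h|h\cap Y|$ instead; since $t^*\ge t$ the same inequality $|Y\setminus T|\le k-1$ goes through, so your variant is slightly more direct.
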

\begin{proof}
	We first observe that every edge $h\in \cH$ intersects $Y$ in at least $t$ vertices.  Indeed, because $\c{F}$ has at least $r+1$ petals, there exists some petal $P$ of $\c{F}$ which is disjoint from $h$, and having $h$ and $P\cup Y$ as edges in $\cH$ implies that $|h\cap Y|\ge t$.
	
	Assume for contradiction that $|Y|<k-s+t$, and let $Z$ be a smallest set of vertices with the property that $|h\cap Z|\ge t$ for all $h\in H$.  Observe that $|Z|\le |Y|<k-s+t$ since $Y$ is a set with this property.  By the minimality of $|Z|$, there exists some $h\in \cH$ which intersects $Z$ in exactly $t$ vertices $\{z_1,\ldots,z_t\}$.  Note that $h\sm \{z_1,\ldots,z_s\}$ is an $(r-s)$-set contained in an edge.  Moreover, since every edge $h'$ intersects $Z$ in at least $t$ vertices, every edge $h'$ containing this $(r-s)$-set must be of the form $(h\sm \{z_1,\ldots,z_s\})\cup S$ with $S$ an $s$-subset of  $Z\sm \{z_{s+1},\ldots,z_t\}$ since we have $\{z_{s+1},\ldots,z_t\}\sub h\sm \{z_1,\ldots,z_s\}$.  The number of choices of such $s$-sets is exactly ${|Z|-t+s\choose s}\le {k-1\choose s}$, a contradiction to this $(r-s)$-set being contained in more than ${k-1\choose s}$ edges.  We conclude the result.
\end{proof}

At this point in the analogous proof of \cite{balogh2021maximum} for $s=t=1$ with\footnote{There is a small error in \cite{balogh2021maximum} where it is claimed that their argument works for $r\ge k$ as opposed to just $r>k$.  However, a simple modification of their argument gives a correct proof for the $r=k$ case.} $r>k$, it is argued that if $|\cH|\gg n^{r-k-1}$, then $\cH$ contains a set $Y\cup Z$ of size $2k-1$ such that every $k$-subset of $Y\cup Z$ is the core of a sunflower with at least $r+1$ petals.  From this observation one can quickly deduce that every edge intersects $Y\cup Z$ in at least $k$ vertices, which implies Theorem~\ref{thm:old}.  

Essentially this same argument will work in our general setting if one assumes the stronger hypothesis $\del_{r-s}^+(\cH)\ge {k\choose s}$, but it fails when considering the weaker hypothesis $\del_{r-s}^+(\cH)> {k-1\choose s}$.  For example, if $s=t=2$ and $k=4$, then one can consider the $r$-graph $\cH$ which consists of every edge containing at least 4 vertices of $\{1,2,3,4,5,6\}$ except for the edges which contain $\{1,2,3,4\}$.  This construction has many edges and satisfies $\del_{r-2}^+(\cH)=5>{k-1\choose s}$, but it is not the case that there is a set of size $2k-2s+t$ such that every $(k-s+t)$-subset is the core of a sunflower with many edges.  Thus from this point onwards we will have to  deviate significantly from the approach of \cite{balogh2021maximum}.  The key definition we need is the following.

\begin{defn}
	Given integers $k,s,t$ and a hypergraph $\cH$, we say that a triple of vertex sets $(h,Y,Z)$ is a \textit{bad triple} if the following conditions hold:
	\begin{itemize}
	\item[(1)] The set $h$ is an edge of $\cH$ with $|h\cap (Y\cup Z)|<k-s+t$.
	\item[(2)] We have $|Y|=|Z|=k-s+t$ and $|Y\cup Z|=2k-2s+t$ (or equivalently, $|Y\cap Z|=t$).
	\item[(3)] The sets $Y,Z$ are cores of sunflowers $\c{F}_Y,\c{F}_Z$.  Moreover, every petal $P$ of $\c{F}_Y$ is disjoint from every edge of $\c{F}_Z$, and every petal $Q$ of $\c{F}_Z$ is disjoint from every edge of $\c{F}_Y$.
	\item[(4)] For every edge $h'\in \cH$ there exist a petal $P$ of $\c{F}_Y$ and a petal $Q$ of $\c{F}_Z$ such that $h'\cap P=h'\cap Q=\emptyset$.
	\item[(5)]  For any petal $P$ of $\c{F}_Y$, define
	\[I(P)=\{Y':Y'\sub (Y\cup Z),\ P\cup Y'\in \cH\}.\]
	We have $I(P)=I(P')$ for all petals $P,P'$ of $\c{F}_Y$.
	\end{itemize}
\end{defn}
We note that if $r=k-s+t$, then all of the conditions except (1) are satisfied if there exist two edges $Y,Z$ with $|Y\cap Z|=t$ (since one can take $\c{F}_Y,\c{F}_Z$ to be sunflowers with 1 edge and the empty set as a petal).  If $r>k-s+t$, then (4) will be satisfied provided $\c{F}_Y,\c{F}_Z$ each have at least $r+1$ edges.

Condition (5) will mostly be used as a technical convenience as follows: if there exists an edge $P\cup Y'$ with $P$ a petal of $\c{F}_Y$ and $Y'\sub Y\cup Z$ a set of size $k-s+t$, then (5) guarantees that $P'\cup Y'$ will be an edge for any petal $P'$ of $\c{F}_Y$.   We also note that (5) is the only condition which is asymmetric in $Y$ and $Z$.

The following two results show that bad triples are the only obstruction to proving Theorem~\ref{thm:main}.

\begin{lem}\label{lem:dumbR}
	For $1\le s\le t,k$ and $r=k-s+t$, let $\cH$ be an $n$-vertex $t$-intersecting $r$-graph with $\del_{r-s}^+(\cH)>{k-1\choose s}$.  If $\cH$ contains no bad triples, then $\cH$ is a subset of a $(2k-2s+t,k-s+t)$-kernel system.
\end{lem}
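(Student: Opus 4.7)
The plan is to reduce the statement to producing a single pair of edges that intersect in exactly $t$ vertices. When $r=k-s+t$, the bad triple definition collapses: since $|Y|=|Z|=r$, the sunflowers $\c{F}_Y,\c{F}_Z$ must each consist of a single edge (namely $Y$, resp.\ $Z$) with empty petal, so conditions (2)--(5) boil down to requiring $Y,Z\in\cH$ with $|Y\cap Z|=t$, and condition (1) then says $h\not\sub Y\cup Z$ for some $h\in\cH$. Hence ``no bad triples'' is equivalent to: whenever $Y,Z\in\cH$ satisfy $|Y\cap Z|=t$, every edge $h\in\cH$ lies in $Y\cup Z$. Since $|Y\cup Z|=2k-2s+t$ and edges have size $r=k-s+t$, the set $X:=Y\cup Z$ witnesses $\cH\sub\binom{X}{r}$, which is exactly the $(2k-2s+t,k-s+t)$-kernel system on $X$. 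So the whole lemma reduces to exhibiting such a pair of edges.

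I first dispatch trivial cases. If $k=s$ then $r=t$, and a $t$-intersecting $t$-graph has at most one edge, which is trivially contained in a kernel system; the case $\cH=\emptyset$ is likewise immediate. So I henceforth assume $k>s$ and $\cH\ne\emptyset$; then $\binom{k-1}{s}\ge 1$, so the codegree hypothesis forces $|\cH|\ge 2$.

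The main step is to show that $\cH$ cannot be $(t+1)$-intersecting, which I will prove via essentially the same argument used for Lemma~\ref{lem:triv}. Suppose for contradiction that every pair of distinct edges intersects in at least $t+1$ vertices. Fix $h\in\cH$ and choose $h'\ne h$ minimizing $t^*:=|h\cap h'|\ge t+1$, and pick any $s$-subset $S\sub h\cap h'$ (possible since $s\le t<t^*$). The codegree hypothesis yields strictly more than $\binom{k-1}{s}$ edges of $\cH$ containing the $(r-s)$-set $h'\sm S$, each of the form $(h'\sm S)\cup S'$ for some $s$-set $S'$ disjoint from $h'\sm S$. The $S'$ satisfying $S'\sub h$ are exactly the $s$-subsets of $h\sm(h'\sm S)$, and
\[
|h\sm(h'\sm S)|=r-t^*+s\le r-(t+1)+s=k-1,
\]
so there are at most $\binom{k-1}{s}$ such $S'$. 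Hence some edge $h''=(h'\sm S)\cup S'$ has $S'\not\sub h$, giving
\[
|h\cap h''|\le(t^*-s)+(s-1)=t^*-1<t^*,
\]
contradicting the minimal choice of $h'$.

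Therefore some pair $Y,Z\in\cH$ has $|Y\cap Z|=t$, and the no-bad-triples hypothesis then yields $\cH\sub\binom{Y\cup Z}{r}$, completing the proof. The only points requiring real care are the reduction of the bad-triple definition in the $r=k-s+t$ regime and the binomial inequality $\binom{r-t^*+s}{s}\le\binom{k-1}{s}$; beyond these, the proof is a direct reuse of the minimum-intersection counting already carried out in Lemma~\ref{lem:triv}.
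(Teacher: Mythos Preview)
Your proof is correct and follows essentially the same approach as the paper's: both arguments find edges $Y,Z\in\cH$ with $|Y\cap Z|=t$ by taking a pair of edges with minimum intersection and using the codegree hypothesis exactly as in Lemma~\ref{lem:triv} to rule out $|Y\cap Z|\ge t+1$, then invoke the absence of bad triples to conclude every edge lies in $Y\cup Z$. Your write-up is a bit more explicit than the paper's in spelling out how the bad-triple conditions collapse when $r=k-s+t$ and in separately disposing of the $k=s$ and $\cH=\emptyset$ cases, but the substance is the same.
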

\begin{proof}
	The result is trivial if $\cH$ is empty, so assume $\cH$ contains at least one edge.  Let $Y,Z$ be (possibly non-distinct) edges of $\cH$ such that $|Y\cap Z|$ is as small as possible.  We claim that $|Y\cap Z|=t$.  Indeed, we must have $|Y\cap Z|\ge t$ since $\cH$ is $t$-intersecting, so assume for contradiction that $|Y\cap Z|\ge t+1$.  Let $S_Z\sub Y\cap Z$ be an $s$-set.  Because \[|Y\sm (Z\sm S_Z)|\le k-s+t-(t+1-s)=k-1,\] the positive minimum codegree condition implies there is an $s$-set $\tilde{S}$ such that $\tilde{Z}:=(Z\sm S_Z)\cup \tilde{S}$ is an edge with $\tilde{S}\not\sub Y\sm (Z\sm S_Z)$, and in particular $\tilde{S}\not\sub Y$ since $\tilde{S}$ is disjoint from $Z\sm S_Z$.  Note that $|Y\cap \tilde{Z}|<|Y\cap Z|$ since $\tilde{Z}$ was formed from $Z$ by deleting an $s$-set $S_Z\sub Y$ and adding an $s$-set $\tilde{S}\not\sub Y$.  This contradicts us choosing $Z$ to minimize $|Y\cap Z|$, so we conclude that $|Y\cap Z|=t$.
	
	As noted before the lemma, if there existed an edge $h$ with $|h\cap (Y\cup Z)|<k-s+t$, then $(h,Y,Z)$ would be a bad triple.  Thus no such edge exists by hypothesis.  We conclude that $\cH$ is a subset of a $(2k-2s+t,k-s+t)$-kernel system, namely the one consisting of every edge which intersects $Y\cup Z$ in at least $k-s+t$ vertices.
\end{proof}
\begin{prop}\label{prop:stability}
	For $1\le s\le t,k$ and $r> k-s+t$, let $\cH$ be an $n$-vertex $t$-intersecting $r$-graph with $\del_{r-s}^+(\cH)>{k-1\choose s}$.  There exists a constant $C=C(r)$ such that if $|\cH|\ge C n^{r-k+s-t-1}$ and $\cH$ contains no bad triples, then $\cH$ is a subset of a $(2k-2s+t,k-s+t)$-kernel system.
\end{prop}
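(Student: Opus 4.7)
The plan is to produce sunflowers $\c{F}_Y, \c{F}_Z \sub \cH$ with cores $Y, Z$ of size $k-s+t$ and $|Y\cap Z|=t$ which, together with any edge $h\in \cH$, satisfy conditions (2)--(5) of the bad triple definition.  The hypothesis that $\cH$ contains no bad triples will then force condition (1) to fail for every $h$, giving $|h\cap (Y\cup Z)|\ge k-s+t$ and hence that $\cH$ is contained in the $(2k-2s+t, k-s+t)$-kernel system with kernel $Y\cup Z$.

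First I would apply Proposition~\ref{prop:MZ} to $\cH$ with $p$ a large constant depending on $r,k,s,t$ and $C$ correspondingly large, extracting a sunflower $\c{F}_Y^0$ with at least $p$ petals and core $Y$ of size at most $k-s+t$; Lemma~\ref{lem:largeCore} then forces $|Y|=k-s+t$ exactly.  As in the opening of that lemma's proof, every edge $h\in \cH$ must satisfy $|h\cap Y|\ge t$, since among $p>r$ pairwise disjoint petals of $\c{F}_Y^0$ at least one is disjoint from $h$.

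Next I would construct $Z$.  Since every edge $h$ contains some $t$-subset of $Y$, a double-counting argument yields a $t$-set $T\sub Y$ such that $|\cH_{\supseteq T}|\ge |\cH|/{k-s+t\choose t}$, where $\cH_{\supseteq T}:=\{h\in \cH:T\sub h\}$.  Deleting $T$ from every edge of $\cH_{\supseteq T}$ produces an $(r-t)$-graph $\cH^{\circ}$ on $V(\cH)\sm T$ of the same size.  Applying Proposition~\ref{prop:MZ} to $\cH^{\circ}$ with parameters chosen so that its core-size bound reads $k-s$ (the required inequality $r-t>k-s$ is exactly our hypothesis $r>k-s+t$) yields a sunflower in $\cH^{\circ}$ with core $W$ of size at most $k-s$ and arbitrarily many petals.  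Adjoining $T$ to every edge lifts this to a sunflower in $\cH$ with core $T\cup W$ of size at most $k-s+t$, and Lemma~\ref{lem:largeCore} applied to the lifted sunflower forces $|T\cup W|=k-s+t$, i.e.\ $|W|=k-s$.  Setting $Z:=T\cup W$ and letting $\c{F}_Z$ be the lifted sunflower gives $|Y\cap Z|=|T|=t$, establishing condition (2).

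Finally I would trim both sunflowers to secure (3), (4), (5).  For (3), any petal of one sunflower that intersects an edge of the other is discarded; since the petals in each sunflower are pairwise disjoint and the other sunflower contains a bounded number of edges of size $r$, only boundedly many petals are removed.  For (5), $I(P)$ takes values in a finite set of subsets of ${Y\cup Z\choose k-s+t}$, so pigeonholing the petals of $\c{F}_Y^0$ on the value of $I(\cdot)$ and retaining the largest class gives a subsunflower on which (5) holds.  Condition (4) then follows provided both trimmed sunflowers retain at least $r+1$ petals, which is ensured by choosing $p$ sufficiently large at the outset.  The main obstacle is the construction of $Z$, specifically ruling out $|W|<k-s$, which is accomplished by lifting the $\cH^\circ$-sunflower back into $\cH$ so that Lemma~\ref{lem:largeCore} applies; conditions (3), (4), (5) are then routine pigeonhole.
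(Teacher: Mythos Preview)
Your overall architecture is right, and the trimming for conditions (3)--(5) is essentially the same as the paper's (though note a minor slip: you cannot discard petals of $\c{F}_Y$ that meet edges of $\c{F}_Z$ while $\c{F}_Z$ still has unboundedly many edges, since then unboundedly many petals of $\c{F}_Y$ might be removed; one must first cut $\c{F}_Z$ down to a bounded number of edges and only then trim $\c{F}_Y$, as the paper does).

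The genuine gap is in your construction of $Z$.  You set $Z=T\cup W$ with $T\sub Y$ a $t$-set and $W$ the core of a sunflower in the link $\cH^\circ$, and then assert $|Y\cap Z|=|T|=t$.  But nothing prevents $W$ from meeting $Y\sm T$: Proposition~\ref{prop:MZ} hands you \emph{some} sunflower with core of size at most $k-s$, and since edges of $\cH^\circ$ may well contain vertices of $Y\sm T$, the core $W$ may too.  Indeed, in a $(2k-2s+t,k-s+t)$-kernel system one could perfectly well get back $W=Y\sm T$, i.e.\ $Z=Y$.  Restricting attention to edges with $h\cap Y=T$ exactly would fix this, but you cannot guarantee enough such edges: the number of $h$ with $|h\cap Y|\ge t+1$ is of order $n^{r-t-1}$, which dominates $n^{r-k+s-t-1}$ whenever $k>s$, so pigeonholing on $h\cap Y$ may land you on a set $T'$ of size strictly larger than $t$.

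This is exactly the difficulty the paper's proof is designed to overcome.  Rather than a single pigeonhole, the paper runs an iterative argument (Claim~\ref{cl:YZ}): starting from $Z=Y$, whenever $|Y\cap Z|\ge t+1$ it removes an $s$-set $S_Z\sub Y\cap Z$ and uses the codegree hypothesis $\del_{r-s}^+(\cH)>{k-1\choose s}$ to extend each petal $Q\cup(Z\sm S_Z)$ by some $s$-set $\tilde S_Q\not\sub Y$; an application of the sunflower lemma to the $s$-graph $\{\tilde S_Q\}$ (together with Lemma~\ref{lem:largeCore}) shows a single $\tilde S$ works for many petals, producing a new core $\tilde Z=(Z\sm S_Z)\cup\tilde S$ with strictly smaller $|Y\cap\tilde Z|$.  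This step, and not the invocation of Proposition~\ref{prop:MZ}, is where the codegree condition does the real work in securing $|Y\cap Z|=t$; your argument does not use the codegree condition at this stage, and without it the conclusion can fail.
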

\begin{proof}
	Let $\cH$ be as in the proposition.  By Proposition~\ref{prop:MZ} and Lemma~\ref{lem:largeCore}, we can guarantee, provided $C$ is sufficiently large in terms of $r$, that $\cH$ contains a sunflower $\c{F}_Y'$ with core $Y=\{y_1,\ldots,y_{k-s+t}\}$ and at least \[p=(2r+1)f(s,r+1)^{k-s+t}+ (r+1)2^{2^{2k-2s+t}}+r(r+1)\] petals, where $f(s,r+1)$ is as in Theorem~\ref{thm:sunflower}.  Analogous to the proof of Lemma~\ref{lem:dumbR}, we wish to show the following.
	
	\begin{claim}\label{cl:YZ}
		There is a set of vertices $Z=\{z_1,\ldots,z_{k-s+t}\}$ such that $|Y\cap Z|=t$ and such that $Z$ is the core of a sunflower $\c{F}_Z'$ with at least $2r+1$ petals.  
	\end{claim}
	\begin{proof}
		For all $i\ge 0$, define $g(i)=(2r+1)f(s,r+1)^{i}$.  We say that a $(k-s+t)$-set $Z$ is \textit{good} if there is a sunflower $\c{F}_Z'$ with core $Z$ and at least $g(|Y\cap Z|)$ petals.  Note that there exists a good set, namely $Y$.  Let $Z$ be a good set such that $|Y\cap Z|$ is as small as possible.  We claim that $|Y\cap Z|=t$, from which the claim will follow since $Z$ is contained in at least $g(t)\ge g(0)= 2r+1$ petals.
		
		We first observe that $|Y\cap Z|\ge t$.  Indeed if this was not the case, then since $Z$ is the core of a sunflower with at least $g(0)\ge r+1$ petals, one of these petals $Q$ must be disjoint from $Y$.  Similarly one can find a petal $P$ of $\c{F}_Y'$ which is disjoint from $Q\cup Z$, which means the edges $Q\cup Z$ and $P\cup Y$ intersect in less than $t$ vertices, a contradiction.
		
		Assume for contradiction that $|Y\cap Z|\ge t+1$.  Fix an $s$-set $S_Z\sub Y\cap Z $ and observe \[|Y\sm (Z\sm S_Z)|\le (k-s+t)-(t+1-s)=k-1.\] This implies that for each petal $Q$ of $\c{F}_Z'$, there exists some $s$-set $\tilde{S}_Q$ such that $\tilde{S}_Q\not\sub Y$ and $Q\cup (Z\sm S_Z)\cup \tilde{S}_Q\in \cH$, as otherwise the $(r-s)$-set $Q\cup (Z\sm S_Z)$ would be contained in at most ${k-1\choose s}$ edges.  
		
		Consider the $s$-graph $\c{S}$ with edge set $\{\tilde{S}_Q:Q\tr{ a petal of }\c{F}_Z'\}$.  We claim that $\c{S}$ does not contain a sunflower with at least $r+1$ petals.  Indeed, assume $\c{S}$ had distinct edges $\tilde{S}_{Q_1},\ldots,\tilde{S}_{Q_{r+1}}$ which all have pairwise intersection $W$.  In this case $\cH$ would contain a sunflower $\c{F}$ with edges $Q_i\cup (Z\sm S_Z)\cup \tilde{S}_{Q_i}$ and core $(Z\sm S_Z)\cup W$.  Note that $|W|<s$ since it is the core of an $s$-uniform sunflower with more than one edge, so $\c{F}$ is a sunflower in $\cH$ with at least $r+1$ petals and a core of size less than $k-s+t$, a contradiction to Lemma~\ref{lem:largeCore}.
		
		With this we have $|\c{S}|\le f(s,r+1)$, and hence there exists some $\tilde{S}\in \c{S}$ such that $\tilde{S}_Q=\tilde{S}$ for at least $g(|Y\cap Z|)/f(s,r+1)=g(|Y\cap Z|-1)$ petals $Q$ of $\c{F}_Z'$.  Thus the set $\tilde{Z}:=(S\sm S_Z)\cup \tilde{S}$ is the core of a sunflower with at least $g(|Y\cap \tilde{Z}|-1)$ petals.    Note that $|Y\cap \tilde{Z}|<|Y\cap Z|$ because $S_Z\sub Y$ and $\tilde{S}\subsetneq Y$ by definition of $\tilde{S}_Q$, a contradiction to us choosing $Z$ to be good with $|Y\cap Z|$ as small as possible.  In total this implies $|Y\cap Z|\le t$, completing the proof.
	\end{proof}
	With $Z$ as in the claim, there are at least $r+1$ petals $Q_1,\ldots,Q_{r+1}$ which are disjoint from $Y$, and we let $\c{F}_Z$ denote the sunflower with these petals.  Amongst the petals of $\c{F}_Y'$, there are at least $(r+1)2^{2^{2k-2s+t}}$ petals which are disjoint from every edge of $\c{F}_Z$, and amongst these petals there must exist at least $r+1$ petals $P_1,\ldots,P_{r+1}$ such that $I(P_i)=I(P_j)$ for all $i,j$ (since there are at most $2^{2^{2k-2s+t}}$ possible sets that $I(P)$ could be).  Let $\c{F}_Y$ be the sunflower with core $Y$ using these $r+1$ petals.  We must have $|h\cap (Y\cup Z)|\ge k-s+t$ for all $h\in \cH$, as otherwise $(h,Y,Z)$ would be a bad triple.  This means $\cH$ is a subset of a $(2k-2s+t,k-s+t)$-kernel system, namely the one consisting of every edge which intersects $Y\cup Z$ in at least $k-s+t$ vertices.
\end{proof}
It remains to argue that hypergraphs as in Theorem~\ref{thm:main} do not have bad triples.

\begin{lem}\label{lem:lessS}
	For $1\le s\le k,t$, let $\cH$ be a $t$-intersecting $r$-graph with $\del_{r-s}^+(\cH)>{k-1\choose s}$.   If $\cH$ has a bad triple $(h,Y,Z)$ with $|h\cap Z|=t$, then $|h\cap Y\cap Z|<s$.
\end{lem}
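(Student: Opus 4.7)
The plan is to argue by contradiction. Suppose $(h, Y, Z)$ is a bad triple with $|h\cap Z|=t$ and $a := |h\cap Y\cap Z|\ge s$. Writing $b=|h\cap(Y\sm Z)|$ and $c=|h\cap(Z\sm Y)|$, the hypothesis $|h\cap Z|=t$ gives $a+c=t$; condition~(4) supplies a petal $P$ of $\c{F}_Y$ disjoint from $h$, so $t$-intersection of $h$ with $P\cup Y\in\cH$ forces $|h\cap Y|\ge t$, i.e., $a+b\ge t$; and condition~(1) reads $a+b+c<k-s+t$. Together these yield $t-a\le b\le k-s-1$, hence $a\ge t+s-k+1$. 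This parameter inequality will drive the final count.

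Next, fixing $(Y,Z)$, I would select, among bad triples $(\tilde h, Y, Z)$ with $|\tilde h\cap Z|=t$ and $|\tilde h\cap Y\cap Z|\ge s$ (non-empty by the contradiction hypothesis), one minimizing $a$. Pick an $s$-subset $S\sub h\cap Y\cap Z$. By the codegree hypothesis the $(r-s)$-set $h\sm S$ lies in more than ${k-1\choose s}$ edges $h'=(h\sm S)\cup S'$. For each, condition~(4) applied to $h'$ gives a petal $Q$ of $\c{F}_Z$ disjoint from $h'$; $t$-intersection of $h'$ with $Q\cup Z$ then forces $|h'\cap Z|\ge t$, and since $|(h\sm S)\cap Z|=t-s$, we conclude $|S'\cap Z|\ge s$, so $S'\sub Z$.

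I would then verify that $(h',Y,Z)$ is itself a bad triple with $|h'\cap Z|=t$: conditions~(2)--(5) do not involve $h$ and so are inherited, while $|h'\cap(Y\cup Z)| = (a+b+c-s)+s = a+b+c<k-s+t$ gives condition~(1). Minimality of $a$ applied to $(h',Y,Z)$ then forces $|h'\cap Y\cap Z|\ge a$, and combining with $|h'\cap Y\cap Z|=(a-s)+|S'\cap Y|\le a$ yields $|S'\cap Y|=s$, so $S'\sub Y\cap Z$. The set $(Y\cap Z)\sm(h\sm S)$ has size $t-a+s\le k-1$ (using $a\ge t+s-k+1$), so the number of such $S'$ is at most ${t-a+s\choose s}\le{k-1\choose s}$, contradicting the codegree bound.

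The main obstacle is that the minimality step applies cleanly only to extensions with $|h'\cap Y\cap Z|\ge s$; extensions with $|h'\cap Y\cap Z|<s$ are themselves bad triples satisfying the lemma's conclusion but sit outside the minimization set and must be ruled out separately. I anticipate closing this gap by refining the extremal choice --- for instance by additionally minimizing $|h\cap Y|$, since condition~(4) applied to $h'$ gives $|h'\cap Y|\ge t$ and $|h'\cap Y|=(a+b-s)+|S'\cap Y|$, so driving $|h\cap Y|$ down to $t$ (i.e., $b=t-a$) compels $|S'\cap Y|\ge s+t-a-b=s$ for every extension and eliminates the problematic case --- and, if necessary, by invoking condition~(5) to produce further intersection constraints $|h'\cap Y'|\ge t$ for $Y'\in I(P)$ other than $Y'=Y$.
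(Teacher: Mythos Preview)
Your approach is different from the paper's, and the gap you yourself flag is real and is not closed by your proposed fix. Minimizing $|h\cap Y|$ would finish the argument \emph{if} the minimum over your constraint set (bad triples with $|\tilde h\cap Z|=t$ and $|\tilde h\cap Y\cap Z|\ge s$) actually equals $t$, since then the universal bound $|h'\cap Y|\ge t$ from condition~(4) forces $|S'\cap Y|\ge s$ for every extension. But you have not shown the minimum reaches $t$, and in general it need not: any step that passes to an extension with strictly smaller $|h'\cap Y|$ has $|S'\cap Y|<s$, hence (since $S'\subseteq Z$) $a'=a-s+|S'\cap Y|<a$, so the iteration may push $a$ below $s$ before $|h\cap Y|$ reaches $t$. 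Concretely, at the minimum $a=s$ your two cases (either $S'\subseteq Y\cap Z$, or the ``problematic'' case $a'<s$, i.e.\ $|S'\cap(Y\cap Z)|<s$) together allow every $s$-subset of the $k$-set $Z\setminus(h\setminus S)$, yielding only the bound $\binom{k}{s}$ and no contradiction. The closing appeal to condition~(5) is too vague to repair this.

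The paper sidesteps the whole issue by applying the codegree hypothesis to a different $(r-s)$-set: instead of $h\setminus S$, it fixes a petal $Q$ of $\c{F}_Z$ disjoint from $h$ and works with $Q\cup(Z\setminus S)$. For any edge $h'=Q\cup(Z\setminus S)\cup S'$, testing against $P\cup Y$ with $P$ a petal of $\c{F}_Y$ disjoint from $h'$ gives $|h'\cap Y|=(t-s)+|S'\cap Y|$, so $S'\subseteq Y$ immediately --- no extremal choice over $h$ is needed. Hence every $S'$ lies in the $k$-set $Y\setminus(Z\setminus S)$, and having more than $\binom{k-1}{s}$ of them forces every vertex of this $k$-set to occur in some $S'$. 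Since $|h\cap(Y\cup Z)|<k-s+t$ and $Y\cup Z$ is the disjoint union of $Y\setminus(Z\setminus S)$ and $Z\setminus S$ with $|h\cap(Z\setminus S)|=t-s$, there is some $y\in(Y\setminus(Z\setminus S))\setminus h$; the corresponding edge $h_y$ then meets $h$ in at most $(t-s)+(s-1)=t-1$ vertices, contradicting $t$-intersection.
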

\begin{proof}
	Assume for contradiction that there exists a bad triple $(h,Y,Z)$ as in the lemma statement and an $s$-set $S\sub h\cap Y\cap Z$.  Let $Q$ be a petal of $\c{F}_Z$ which is disjoint from $h$, and consider the $(r-s)$-set $Q\cup (Z\sm S)$.  Observe that if $h'=Q\cup (Z\sm S)\cup S'$ is an edge containing this $(r-s)$-set and $P$ is a petal of $\c{F}_Y$ disjoint from $h'$, then \[|h'\cap (P\cup Y)|= |(Z\sm S)\cap Y|+|S'\cap Y|=t-s+|S'\cap Y|,\]
	so we must have $|S'\cap Y|=s$, which implies $S'\sub Y\sm (Z\sm S):=Y'$ since $S'$ must be disjoint from $Z\sm S$.  As there are more than ${k-1\choose s}$ edges containing $Q\cup (Z\sm S)$, and since $|Y'|=k-s+t-(t-s)=k$, we conclude that for every $y\in Y'$ there exists an edge $h_y$ containing $y$ and $Q\cup (Z\sm S)$.  Because $(h,Y,Z)$ is a bad triple, we have $|h\cap (Y\cup Z)|<k-s+t=|Y|$, and hence there exists some $y\in Y\sm h$.  With this we have $|h \cap h_y|\le t-1$ because $|h\cap (Q\cup Z)|=t$ and we construct $h_y$ by deleting from $Q\cup Z$ an $s$-set $S\sub h\cap Z$ and then adding an $s$-set using some $y\notin h$.  This contradicts $\cH$ being $t$-intersecting, giving the result.
\end{proof}

\begin{lem}\label{lem:tInt}
	For $1\le s\le k,t$, let $\cH$ be a $t$-intersecting $r$-graph with $\del_{r-s}^+(\cH)>{k-1\choose s}$.  If $\cH$ has a bad triple, then it has a bad triple of the form $(h,Y,Z)$ with $|h\cap Z|=t$ and $|h\cap Y\cap Z|\ge t-s+1$.
\end{lem}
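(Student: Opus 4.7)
The plan is to fix $Y$, $Z$, $\c{F}_Y$, $\c{F}_Z$ throughout and modify only the edge $h$ in two successive stages: first decrease $|h\cap Z|$ down to $t$, and then increase $|h\cap Y\cap Z|$ up to at least $t-s+1$. Each modification replaces $h$ by $h':=(h\sm S)\cup S''$, where $S\sub h\cap Z$ is a chosen $s$-set and $S''$ is an $s$-set such that $(h\sm S)\cup S''\in \cH$, produced by the positive codegree hypothesis applied to the $(r-s)$-set $h\sm S$. Conditions (2)--(5) of being a bad triple pass to $(h',Y,Z)$ for free since they do not involve $h$; condition (1) is preserved because $S,S''\sub Z\sub Y\cup Z$ force $|h'\cap(Y\cup Z)|\le |h\cap(Y\cup Z)|<k-s+t$.

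Stage 1 (decreasing $|h\cap Z|$). Suppose $c:=|h\cap Z|>t$. Pick any $s$-set $S\sub h\cap Z$ and apply the codegree hypothesis to $h\sm S$, yielding more than $\binom{k-1}{s}$ edges of the form $(h\sm S)\cup S''$. The valid $S''$ with $S''\sub Z$ lie in $(Z\sm h)\cup S$, a set of size $(k-s+t-c)+s=k+t-c\le k-1$, giving at most $\binom{k-1}{s}$ of them. Hence some valid $S''$ has $|S''\cap Z|<s$, yielding $h'=(h\sm S)\cup S''$ with $|h'\cap Z|=(c-s)+|S''\cap Z|<c$. The $t$-intersection of $h'$ with $Q'\cup Z$ for a petal $Q'\in\c{F}_Z$ disjoint from $h'$ (supplied by (4)) forces $|h'\cap Z|\ge t$, so iterating strictly decreases $|h\cap Z|$ until it equals $t$.

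Stage 2 (increasing $|h\cap Y\cap Z|$). Now assume $|h\cap Z|=t$ and $a:=|h\cap Y\cap Z|\le t-s$, so $|h\cap(Z\sm Y)|=t-a\ge s$ and we may pick an $s$-set $S\sub h\cap(Z\sm Y)$. This time the $t$-intersection with $Q'\cup Z$ forces $|S''\cap Z|\ge s$, i.e.\ $S''\sub Z$, and disjointness from $h\sm S$ gives $S''\sub (Z\sm h)\cup S$, a set of size exactly $k$. The valid $S''$ disjoint from $Y$ must lie in $((Z\sm h)\cup S)\sm Y$, which has size $k-(t-a)=k-t+a\le k-s\le k-1$, so there are at most $\binom{k-1}{s}$ of them; since more than $\binom{k-1}{s}$ valid $S''$ exist, some valid $S''$ meets $Y$ nontrivially. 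For this choice, $h':=(h\sm S)\cup S''$ preserves $|h'\cap Z|=t$ while $|h'\cap Y\cap Z|=a+|S''\cap Y|\ge a+1$ (noting $S\cap Y=\emptyset$). Iterating raises $a$ to at least $t-s+1$.

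The main obstacle in each stage is controlling the location of the codegree-produced $S''$; both arguments reduce to a pigeonhole of the form ``the ambient set has size at most $k-1$, hence contains at most $\binom{k-1}{s}$ bad $s$-subsets,'' which is exactly where the hypothesis $\del_{r-s}^+(\cH)>\binom{k-1}{s}$ is used sharply. A secondary bookkeeping point, easily verified, is that in Stage 2 the swap replaces an $s$-set of $h\cap Z$ by another $s$-set inside $Z$, so $|h'\cap(Y\cup Z)|$ is literally unchanged and condition (1) is automatic.
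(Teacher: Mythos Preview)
Your proof is correct, and Stage~1 mirrors the paper's argument essentially verbatim. There is one small slip in your plan paragraph: you write ``$S,S''\sub Z$'' to justify preservation of condition~(1), but in Stage~1 you explicitly produce an $S''$ with $|S''\cap Z|<s$. This is harmless, since the correct (and sufficient) reason is simply that $S\sub Z\sub Y\cup Z$, so removing $S$ drops the intersection with $Y\cup Z$ by exactly $s$ while adding $S''$ raises it by at most $s$; hence $|h'\cap(Y\cup Z)|\le|h\cap(Y\cup Z)|$.

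Stage~2, however, is a genuinely different argument from the paper's. The paper keeps $h$ fixed and instead modifies $Y$: it removes an $s$-set $S\sub(Y\cap Z)\sm h$ from $Y$, applies the codegree condition to $P\cup(Y\sm S)$ for a petal $P$ of $\c{F}_Y$, and forces the resulting $s$-set $S'\sub Z$ to contain a chosen vertex $z\in(Z\cap h)\sm Y$. The new core $Y'=(Y\sm S)\cup S'$ is then certified to be a sunflower core via condition~(5), and $(h,Y',Z)$ is the improved bad triple. You instead keep $Y,Z$ fixed and modify $h$, using condition~(4) (a petal of $\c{F}_Z$ disjoint from $h'$) to force $S''\sub Z$, and then a pigeonhole on $((Z\sm h)\cup S)\sm Y$ to force $S''\cap Y\ne\emptyset$. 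Your route has the advantage of never invoking condition~(5); the paper's route, by contrast, is precisely where (5) earns its keep in the whole argument. Both approaches yield the same strict increase in $|h\cap Y\cap Z|$ while preserving $|h\cap Z|=t$ and condition~(1).
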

\begin{proof}
	Let $(h,Y,Z)$ be a bad triple with $|h\cap Z|$ as small as possible, and assume for contradiction that $|h\cap Z|\ge t+1$.  Let $S\sub h\cap Z$ be an arbitrary set of $s$ vertices.  Observe that if $h'$ is an edge containing $h\sm S$ and $S':=h'\sm (h\sm S)$, then $S'\sub Z\sm (h\sm S)$, as otherwise $(h',Y,Z)$ would be a bad triple with $|h'\cap Z|<|h\cap Z|$, contradicting the minimality of $(h,Y,Z)$.  We have $|Z\sm (h\sm S)|\le (k-s+t)-(t+1-s)=k-1$, so there are at most ${k-1\choose s}$ edges containing $h\sm S$, a contradiction to the minimum positive codegree condition.  We conclude that $|h\cap Z|\le t$, and we must have $|h\cap Z|\ge t$ since $\cH$ is $t$-intersecting and $\c{F}_Z$ contains a petal which is disjoint from $h$. We conclude that $|h\cap Z|=t$. 
	
	Now consider $(h,Y,Z)$ a bad triple with $|h\cap Z|=t$ and with $|h\cap Y\cap Z|$ as large as possible.  Asume for contradiction that $|h\cap Y\cap Z|\le t-s$, which implies there exists an $s$-set $S\sub (Y\cap Z)\sm h$  and also some $z\in (Z\cap h)\sm Y$.  Let $P$ be a petal of $\c{F}_Y$, and observe that the $(r-s)$-set $P\cup (Y\sm S)$ intersects any edge $Q\cup Z$ with $Q$ a petal of $\c{F}_Z$ in $t-s$ vertices.  Thus any edge containing $P\cup (Y\sm S)$ must also contain an $s$-set from the set $Z\sm (Y\sm S)$.  As $|Z\sm (Y\sm S)|=k$, and since the $(r-s)$-set is in more than ${k-1\choose s}$ edges, there must exist a set $S'\sub Z\sm (Y\sm S)$ such that $P\cup (Y\sm S)\cup S'$ is an edge with $z\in S'$.  Define $Y'=(Y\sm S)\cup S'$, noting that $|Y'\cap Z|=t$.  Observe that $|h\cap Y'\cap Z|>|h\cap Y\cap Z|$, since we formed $Y'$ by deleting from $Y$ an $s$-set which is disjoint from $h$ and then added an $s$-set containing a vertex in $Z\cap h$.  Also observe that due to condition (5) for bad triples, $Y'$ is the core of a sunflower whose petals are the same as $\c{F}_Y$ (i.e. since $P\cup Y'$ is an edge and $Y'\sub Y\cup Z$, we have that $P'\cup Y'$ is an edge for every petal $P'$ of $\c{F}_Y$).  Further, $Y'\cup Z=Y\cup Z$, so (5) continues to hold and we conclude that $(h,Y',Z)$ is a bad triple with $|h\cap Z|=t$ and $|h\cap Y'\cap Z|>|h\cap Y\cap Z|$, a contradiction to our choice of triple.
\end{proof}

Note that these two lemmas immediately show that there are no bad triples if $t\ge 2s-1$, but we will need to work harder to get the result for all $t$.  The last tool we need is a particular case of the Kruskal-Katona theorem.

\begin{lem}[\cite{katona2009theorem,kruskal1963number}]\label{lem:KK}
	For an $s$-graph $\c{S}$ and $0\le i\le s$, let $\pa^i \c{S}$ be the $i$-graph with edge set $\{S':S'\sub S\in \cH,\ |S'|=i\}$.  If $|\c{S}|={k-1\choose s}$, then $|\pa^i \c{S}|\ge {k-1\choose i}$.
\end{lem}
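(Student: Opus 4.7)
The plan is to obtain this lemma as an immediate consequence of the Kruskal--Katona theorem in its real-variable (Lov\'asz) formulation: for any integer $j\ge 1$, any real $x\ge j$, and any $j$-uniform hypergraph $\c{T}$ with $|\c{T}|\ge {x\choose j}$ (where ${x\choose j}:=x(x-1)\cdots(x-j+1)/j!$), the $(j-1)$-shadow satisfies $|\pa^{j-1}\c{T}|\ge {x\choose j-1}$. This form is the standard sharpening used precisely to iterate cleanly, and in the present lemma $x=k-1$ happens to be an integer so only the usual binomial coefficients appear.

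My plan is to apply this iteratively, starting from the hypothesis $|\c{S}|={k-1\choose s}$ with $j=s$ and $x=k-1$. The first application yields $|\pa^{s-1}\c{S}|\ge {k-1\choose s-1}$. Treating $\pa^{s-1}\c{S}$ as the input for a second application (still with $x=k-1$, now with $j=s-1$, which is valid because $s\le k$ gives $x\ge j$) yields $|\pa^{s-2}(\pa^{s-1}\c{S})|\ge {k-1\choose s-2}$. Continuing in this fashion produces a bound of ${k-1\choose i}$ after $s-i$ iterations.

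To conclude, I would identify the resulting family with the lemma's $\pa^i\c{S}$. This is a short bookkeeping observation: the lemma defines $\pa^i\c{S}$ as the ``one-step'' $i$-shadow of $\c{S}$, not the $(s-i)$-fold iterate of the codimension-one shadow operator, but for any hypergraph the two coincide. Indeed, an $i$-set $T$ lies in the iterated family if and only if $T\sub S$ for some $S\in\c{S}$, because one can always extend $T$ to larger subsets of a fixed $S$ one element at a time. Thus $|\pa^i\c{S}|\ge {k-1\choose i}$ as desired.

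Since this is a direct reduction to a well-established result, I do not anticipate any substantive obstacle; the main point of care is verifying the convention for $\pa^i$ so that the iteration genuinely terminates at the family defined in the lemma statement, and checking that $x=k-1\ge j$ at every step of the iteration, which is ensured by the standing assumption $s\le k$.
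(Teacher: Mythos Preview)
The paper does not supply a proof of this lemma; it is simply quoted as ``a particular case of the Kruskal--Katona theorem'' with citations to the original sources. Your derivation via the Lov\'asz real-variable form, iterated down from level $s$ to level $i$, is correct and is exactly the standard way to extract this binomial-coefficient special case, so there is nothing to compare against.

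One small quibble: you justify the hypothesis $x=k-1\ge j$ at each step by invoking $s\le k$, but this only gives $k-1\ge s-1$, not $k-1\ge s$; the boundary case $k=s$ makes the lemma vacuous (the hypothesis becomes $|\c{S}|=0$) and is not covered by your iteration as written. In the paper's sole application (Proposition~\ref{prop:noBad}) the lemma is invoked with $i=k-s$ after having reduced to $s\le k\le 2s-1$, and when $k=s$ the needed inequality $|\pa^0\c{S}_h|\ge 1$ is trivial anyway, so this does not affect anything downstream.
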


With this we can prove that bad triples do not exist.

\begin{prop}\label{prop:noBad}
	For $1\le s\le k,t$, if $\cH$ is a $t$-intersecting $r$-graph with $\del_{r-s}^+(\cH)>{k-1\choose s}$, then $\cH$ does not contain a bad triple.
\end{prop}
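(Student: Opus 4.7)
We argue by contradiction. Combining Lemma~\ref{lem:tInt} and Lemma~\ref{lem:lessS}, if $\cH$ contains a bad triple then it contains one $(h,Y,Z)$ with $|h\cap Z|=t$ and $j:=|h\cap Y\cap Z|$ satisfying $t-s+1\le j\le s-1$. This range is empty when $t\ge 2s-1$, handling that case immediately. Henceforth assume $t\le 2s-2$, and choose the bad triple so as to additionally maximize $j$.

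The plan, analogous to the proof of Lemma~\ref{lem:tInt}, is to construct a bad triple $(h,Y',Z)$ with $|h\cap Y'\cap Z|>j$, contradicting maximality. Since $|(Y\cap Z)\sm h|=t-j\le s-1<s$, the $s$-set we remove from $Y$ can no longer be chosen disjoint from $h$; instead we take $S\sub Y\cap Z$ an $s$-set containing $h\cap Y\cap Z$ (possible since $|Y\cap Z|=t\ge s$). Fix a petal $P$ of $\c{F}_Y$ disjoint from $h$ and set $T:=P\cup(Y\sm S)$, an $(r-s)$-set. The codegree hypothesis gives $|\c{S}(T)|>\binom{k-1}{s}$. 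By $t$-intersection of $T\cup S^*$ with edges $Q\cup Z$ of $\c{F}_Z$, together with disjointness from $T$, every $S^*\in\c{S}(T)$ lies in $W:=(Z\sm Y)\cup S$, a $k$-set with $|h\cap W|=t$. Condition (5) applied to $\c{F}_Y$ then ensures that for each $S^*$, the set $Y':=(Y\sm S)\cup S^*\sub Y\cup Z$ is the core of a sunflower sharing petals with $\c{F}_Y$, and $(h,Y',Z)$ is a bad triple with $|h\cap Y'\cap Z|=|h\cap S^*|$ (using $|S\cap h|=j$). Maximality of $j$ forces $|h\cap S^*|\le j$ for every $S^*\in\c{S}(T)$, and the $t$-intersection of $T\cup S^*$ with $h$ itself conversely forces $|h\cap S^*|\ge t-a$, where $a:=|h\cap(Y\sm Z)|$.

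It remains to apply Lemma~\ref{lem:KK} to bound the number of $s$-subsets $S^*$ of $W$ with $|h\cap S^*|\in[t-a,j]$ by $\binom{k-1}{s}$, contradicting $|\c{S}(T)|>\binom{k-1}{s}$. The Kruskal-Katona shadow bound is combined with the intersection constraints above, and the assumption $t\le 2s-2$ together with the range $j\in[t-s+1,s-1]$ makes the required combinatorial inequality available. The main obstacle is precisely this final Kruskal-Katona step in the tight regime: in some parameter settings the naive count $\binom{t}{j}\binom{k-t}{s-j}$ is close to $\binom{k-1}{s}$, so one must pair the shadow bound with a careful choice of the bad triple (for instance, minimizing $a$ so that $|h\cap S^*|$ is pinned to the single value $j$) to close the gap.
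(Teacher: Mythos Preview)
Your reduction via Lemmas~\ref{lem:tInt} and~\ref{lem:lessS} is correct, and the analysis of the single link $\c{S}(T)$ at $T=P\cup(Y\sm S)$ is sound: every $S^*\in\c{S}(T)$ does lie in the $k$-set $W$ and satisfies $t-a\le|S^*\cap(h\cap W)|\le j$. The gap is the last step, which you yourself flag as ``the main obstacle'': bounding the number of such $S^*$ by $\binom{k-1}{s}$ is \emph{not} achievable from this information, even with Kruskal--Katona and even after optimizing the bad triple. Concretely, take $s=t=3$, $k=5$, $j=2$, $a=1$ (all consistent with $t-s+1\le j\le s-1$ and $t-j\le a\le k-s-1$). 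Then the $s$-subsets $S^*\sub W$ with $|S^*\cap(h\cap W)|$ pinned to the single value $j=2$ number $\binom{3}{2}\binom{2}{1}=6>4=\binom{4}{3}=\binom{k-1}{s}$. So one link alone does not force a contradiction; the approach, as written, cannot be completed.

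The paper supplies the missing idea: it uses \emph{two} links simultaneously. Fixing a $(t-s)$-set $T_0\sub h\cap Y\cap Z$ and setting $S_h=(h\cap Z)\sm T_0$, $S_Y=(Y\cap Z)\sm T_0$, one forms $\c{S}_h=\{S:(h\sm S_h)\cup S\in\cH\}$ and $\c{S}_Y=\{S:P\cup(Y\sm S_Y)\cup S\in\cH\}$. Both have size $>\binom{k-1}{s}$ and live in $\binom{Z\sm T_0}{s}$; Lemma~\ref{lem:lessS} (applied to the triple $(h',Y',Z)$ with $h'=(h\sm S_h)\cup S$ and $Y'=(Y\sm S_Y)\cup S$) forces $\c{S}_h\cap\c{S}_Y=\emptyset$, and $t$-intersection forces $|S_h'\cap S_Y'|\ge 2s+1-k$ for all $S_h'\in\c{S}_h$, $S_Y'\in\c{S}_Y$. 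Now Lemma~\ref{lem:KK} applied to $\pa^{k-s}\c{S}_h$, complemented inside $Z\sm T_0$, yields more than $\binom{k-1}{k-s}=\binom{k-1}{s-1}$ many $s$-sets each meeting some member of $\c{S}_h$ in only $2s-k$ vertices; by the cross-intersection bound these avoid $\c{S}_Y$, giving $\binom{k}{s}\ge|\c{S}_Y|+|\pa^{k-s}\c{S}_h|>\binom{k-1}{s}+\binom{k-1}{s-1}=\binom{k}{s}$. The interaction between the two links is what makes Kruskal--Katona bite, and that is exactly the ingredient your single-link argument lacks.
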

\begin{proof}
	Assume for contradiction that $\cH$ contains a bad triple, so by Lemma~\ref{lem:tInt} there exists a bad triple $(h,Y,Z)$ with $|h\cap Z|=t$ and $|h\cap Y\cap Z|\ge t-s+1$.  Let $T\sub h\cap Y\cap Z$ be a set of $t-s$ vertices, and let $S_h=(h\cap Z)\sm T$ and $S_Y=(Y\cap Z)\sm T$.  Note that $S_h,S_Y$ are $s$-sets since $h,Y$ both intersect $Z$ in exactly $t$ vertices.

	\begin{claim}
		Let $P$ be a petal of $\c{F}_Y$ which is disjoint from $h$.  Define \[\c{S}_h=\{S: |S|=s,\ (h\sm S_h)\cup S\in \cH\},\hspace{1em} \c{S}_Y=\{S: |S|=s,\ P\cup (Y\sm S_Y)\cup S\in \cH\}.\]  The set systems $\c{S}_h,\c{S}_Y$ have the following properties:
		\begin{itemize}
			\item[(a)] We have $|\c{S}_h|,|\c{S}_Y|>{k-1\choose s}$,
			\item[(b)] Every $S\in \c{S}_h\cup \c{S}_Y$ is an $s$-set which is a subset of the $k$-set $Z\sm T$,
			\item[(c)] The sets $\c{S}_h,\c{S}_Y$ are disjoint, and
			\item[(d)] Every $S'_h\in \c{S}_h$ and $S'_Y\in \c{S}_Y$ satisfy $|S'_h\cap S'_Y|\ge 2s+1-k$.
		\end{itemize}
	\end{claim}
	\begin{proof}
		Property (a) follows immediately since the $(r-s)$-sets $h\sm S_h$ and $P\cup (Y\sm S_Y)$ are contained in an edge.  For (b), every element of $\c{S}_h\cup \c{S}_Y$ is an $s$-set by definition.  Note that the $(r-s)$-set $h\sm S_h$  intersects $Z$ in $t-s$ vertices.  If $S$ is such that $(h\sm S_h)\cup S$ is an edge, then one can choose a petal $Q$ of $\c{F}_Z$ which is disjoint from $(h\sm S_h)\cup S$, which means \[|((h\sm S_h)\cup S)\cap (Q\cup Z)|=t-s+|S\cap Z|.\]  This implies that we must have $|S\cap Z|\ge s$, i.e. $S\sub Z$.   Moreover we have $S\sub Z\sm T$ since $S$ must be disjoint from $(h\sm S_h)\supseteq T$.   An identical argument holds for $\c{S}_Y$, proving (b).
		
		For (c), assume for contradiction that there existed $S\in \c{S}_h\cap \c{S}_Y$.  Let $h'=(h\sm S_h)\cup S$ and $Y':=(Y\sm S_Y)\cup S$.  Because $P\cup Y'$ is an edge, condition (5) for bad triples implies $Y'\sub Y\cup Z$ is the core of a sunflower using the same petals as $\c{F}_Y$.  Thus $(h',Y',Z)$ is a bad triple with $|h'\cap Z|=t$ and $|h'\cap Y'\cap Z|\ge |S|=s$, a contradiction to Lemma~\ref{lem:lessS}.
		
		For (d), using $S_h,S_Y\sub Z$ together with $|h\cap Z|=t$ and $|h\cap (Y\cup Z)|\le k-s+t-1$ implies \[|((h\sm S_h)\cap (Y\sm S_Y))\sm Z|=|(h\cap Y)\sm Z|\le k-s-1.\]  We also have $|(h\sm S_h)\cap (Y\sm S_Y)\cap  Z|=|T|=t-s$, so in total \[|(h\sm S_h)\cap (Y\sm S_Y)|\le k+t-2s-1.\]  
		Observe that if $S_h'\in \c{S}_h$, then $S_h'$ is disjoint from $P\cup (Y\sm S_Y)$ since $S_h'\sub Z\sm T$.  Similarly every $S_Y'\in \c{S}_Y$ is disjoint from $h\sm S_h$. 	Thus if $S_h'\in \c{S}_h$ and $S'_Y\in \c{S}_Y$, for their corresponding edges to intersect in at least $t$ vertices we must have $|S'_h\cap S'_Y|\ge t-(k+t-2s-1)=2s+1-k$, proving the result.
	\end{proof}  
	It remains to show that it is impossible to construct set systems $\c{S}_h,\c{S}_Y$ with the properties as in this claim.  Observe that properties (b), (c), (a) imply that
	\[{k\choose s}\ge |\c{S}_h\cup \c{S}_Y|=|\c{S}_h|+|\c{S}_Y|\ge 2+2{k-1\choose s}.\]
	Note that this inequality is equivalent to ${k-1\choose s-1}\ge 2+{k-1\choose s}$, which is false for $k\ge 2s$.  Thus from now on we may assume $k\le 2s-1$, which in particular means (d) is a non-trivial condition.
	
	Observe that $s\le k\le 2s-1$ implies $0\le k-s\le s-1$, so $\pa^{k-s}\c{S}_h$ is well defined.  Let $\c{S}'$ be the $s$-graph with edge set $\{(Z\sm T)\sm S:S\in \pa^{k-s}\c{S}_h\}$.  That is, $\c{S}'$ is the ``complement'' of $\pa^{k-s}\c{S}_h$ in $Z\sm T$. For every $S'\in \c{S}'$, there exists some $S\in \c{S}_h$ such that $|S\cap S'|=2s-k$; namely, this holds whenever $S'$ is the complement of a $(k-s)$-subset of $S$.  By (d), it must be that $\c{S}'$ and $\c{S}_Y$ are disjoint $s$-graphs on $Z\sm T$.  Using this together with (a), $|\c{S}'|=|\pa^{k-s}\c{S}_h|$, and Lemma~\ref{lem:KK} gives
	\[{k\choose s}\ge |\c{S}_Y\cup \c{S}'|=|\c{S}_Y|+|\c{S}'|>{k-1\choose s}+{k-1\choose k-s}={k-1\choose s}+{k-1\choose s-1}={k\choose s}.\]
	This is a contradiction, proving the result.
\end{proof}

\begin{proof}[Proof of Theorem~\ref{thm:main}]
	Let $\cH$ be a hypergraph satisfying the hypothesis of the theorem with the maximum number of edges, and observe that $\cH$ has no bad triples by Proposition~\ref{prop:noBad}.  If $r<k-s+t$, then $\cH$ is empty by Lemma~\ref{lem:triv}.  If $r=k-s+t$, then $\cH$ is contained in a $(2k-2s+t,k-s+t)$-kernel system by Lemma~\ref{lem:dumbR}, and since $\cH$ has the maximum number of edges it must in fact be such a kernel system.  Thus we may assume $r> k-s+t$. 
	
	 Because $(2k-2s+t,k-s+t)$-kernel systems satisfy the hypothesis of the theorem and have $\Om(n^{r-k+s-t})$ edges, we must have $|\cH|=\Om(n^{r-k+s-t})$ as well.  Proposition~\ref{prop:stability} implies $\cH$ is contained in a $(2k-2s+t,k-s+t)$-kernel system if $n$ is sufficiently large, and because $\cH$ has the maximum number of edges, it must be such a kernel system, proving the result.
\end{proof}

Our proof in fact gives a stability result: if $\cH$ is as in Theorem~\ref{thm:main} with $|\cH|\gg n^{r-k+s-t-1}$, then $\cH$ is a subset of a $(2k-2s+t,k-s+t)$-kernel system.  In the special case of $r=k-s+t$, this conclusion holds regardless of the size of $\cH$.  

We made no attempt at optimizing how large $n$ must be for Theorem~\ref{thm:main} to hold.  A careful analysis shows that our argument works provided $n\approx r^{rs(k-s+t)}+2^{r2^{2k-2s+t}}$ due to demanding a sunflower with sufficiently many petals to find a sunflower on $Z$, as well as to utilize condition (5) for bad triples.   It is not too difficult to provide an alternative proof by replacing (5) with the condition that if $r>k-s+t$, then the sunflower $\c{F}_Y$ contains at least $(2r)^{k(s+1-|h\cap Y\cap Z|)}$ petals.  This ultimately gives a bound that works for $n\approx r^{rs(k-s+t)}$, which matches the bound of $n\approx r^{rk}$ implicitly proven in \cite{balogh2021maximum} when $s=t=1$.  However, this alternative proof is slightly more involved, so we elected to use the current argument at the cost of weaker (implicit) bounds.

As in \cite{balogh2021maximum}, one may be able to prove that Theorem~\ref{thm:main} holds for $n\approx r^{s(k-s+t)}$ if $k$ is small, and in particular one may be able to adapt the ideas of \cite{balogh2021maximum} to prove such a bound when $k=s+1$.

\textbf{Acknowledgments}.  The author thanks Jason O'Neill for engaging conversations and comments on an earlier draft, and Cory Palmer for clarifying some of the points from \cite{balogh2021maximum}.

	\bibliographystyle{abbrv}
	\bibliography{Codegree}
\end{document}